\newtheorem{theorem}{Theorem}[section]
\newtheorem{lemma}[theorem]{Lemma}
\newtheorem{proposition}[theorem]{Proposition}
\theoremstyle{definition}
\title{The $q$-Queens Problem: One-Move Riders on the Rectangular Board}
\author{Jaimal Ichharam}
\date{\today}
\address{Harvard University, Cambridge, MA 02138, USA.} 
\email{jichharam@college.harvard.edu}
\begin{document}

\begin{onehalfspace}

\begin{abstract}
We generalize the recent results of Chaiken et al. in \cite{qQueensII} to a rectangular $m\times n$ chessboard. An explicit formula
for the number of nonattacking configurations of one-move riders on such a chessboard is calculated in two different ways, one utilizing the theory of
symmetric functions and the other the theory of generating functions. With these newly found results, several conjectures and open problems in \cite{qQueensII}
are resolved, and various formulas found by Kotesovec in \cite{MathChess} are generalized.
\end{abstract}

\maketitle

\section{Introduction}
The famous $n$-Queens problem has had a long and storied history since its introduction in 1850. The problem, which consists of determining the number of ways to place $n$ nonattacking queens on an $n\times n$ chessboard, and its subsequent generalizations have
eluded the attempts of many researchers to find a closed form solution and inspired the development of entire branches of combinatorial mathematics,
including the theory of rook polynomials. Since the original puzzle was published, the scope of the problem has drastically expanded, with interest raised in studying the nonattacking configurations of various different types of pieces on arbitrarily shaped boards.\\

Recent work by Chaiken, Hanusa, and Zavlasky in \cite{qQueensI} has since framed these questions in an algebraic perspective by relating the nonattacking configurations of pieces on a chessboard to lattice polytopes and their associated Ehrhart quasipolynomials. In this work, we will combine some of their ideas along with various combinatorial techniques in order to further develop the results of \cite{qQueensII} on a rectangular chessboard, proving the natural generalizations of many of the theorems found in the work and establishing some of their unresolved conjectures. In particular, we obtain two different formulas for the number of nonattacking configurations of $q$ one-move riders on an $m\times n$ rectangular board, and prove
the conjecture found in \cite{qQueensII} that the quasipolynomial associated with the number of such configurations on a square board has $d$-periodic
coefficients.
\section{Two Pieces}
We begin by establishing the generalization of Lemma 3.1 in \cite{qQueensII} to a rectangular board. As in the aforementioned work, we consider a move $(c,d)\in\textbf{M}$ with slope $d/c$, and define the multiset of line sizes
\[\textbf{L}^{d/c}(m,n):=\{|l_{\mathcal{B}}^{d/c}(b)|:l_{\mathcal{B}}^{d/c}(b)\neq\emptyset\},\]
where
\[l_{\mathcal{B}}^{d/c}(b):=l^{d/c}(b)\cap [m,n].\]

\begin{lemma}
Assume $c$ and $d$ are relatively prime integers such that $0<\lfloor \frac{n}{d}\rfloor\leq\lfloor\frac{m}{c}\rfloor$. Let $\bar{n}:=n\pmod d$. Then the multiplicities of the line sizes of $\mathbf{L}^{d/c}(m,n)$ are given in the following table:
\[1\leq l<\lfloor\tfrac{n}{d}\rfloor: 2cd\hspace{1cm}\lfloor\tfrac{n}{d}\rfloor:(d-\bar{n})(m-c\lfloor\tfrac{n}{d}\rfloor)+c(\bar{n}+d)\hspace{1cm}\lfloor\tfrac{n}{d}\rfloor+1:\bar{n}(m-c\lfloor\tfrac{n}{d}\rfloor).\]
\end{lemma}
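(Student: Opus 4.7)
The plan is to count lines of each size by reducing to a count of \emph{geometric sub-lines} of that size and then applying a second-difference inversion. Fix the direction $(c,d)$ with $\gcd(c,d)=1$, and for each integer $s\ge 1$ let $\widetilde N(s)$ denote the number of length-$s$ tuples $(b,\,b+(c,d),\ldots,\,b+(s-1)(c,d))$ lying entirely in $[m,n]$. Since such a tuple fits in the board iff its bounding rectangle of dimensions $((s-1)c+1)\times((s-1)d+1)$ does, one immediately obtains
\[\widetilde N(s)=\max(0,\,m-(s-1)c)\,\max(0,\,n-(s-1)d).\]

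Now let $N(s)$ denote the number of maximal board-lines in direction $(c,d)$ of size exactly $s$, i.e.\ the multiplicity of $s$ in $\mathbf L^{d/c}(m,n)$. Each maximal line of size $s'\ge s$ contributes exactly $s'-s+1$ sub-tuples of length $s$, so $\widetilde N(s)=\sum_{s'\ge s}(s'-s+1)N(s')$. Taking the second finite difference annihilates every term on the right except $N(s)$ itself, yielding the central identity
\[N(s)=\widetilde N(s)-2\widetilde N(s+1)+\widetilde N(s+2).\]

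With this identity in hand, set $q:=\lfloor n/d\rfloor$. The hypothesis $q\le\lfloor m/c\rfloor$ guarantees that both factors in the product formula for $\widetilde N(s)$ are nonnegative exactly for $s\le q+1$, while $\widetilde N(s)=0$ for $s\ge q+2$. The proof then splits into three ranges. For $1\le s<q$ all three terms in the second difference sit in the interior regime, and direct expansion collapses the mixed terms, leaving $N(s)=2cd$. For $s=q$ only $\widetilde N(q+2)$ vanishes; substituting $n=qd+\bar n$ into the two surviving terms and regrouping by powers of $m$ gives $N(q)=(d-\bar n)(m-qc)+c(\bar n+d)$. For $s=q+1$ both $\widetilde N(q+2)$ and $\widetilde N(q+3)$ vanish, so $N(q+1)=\widetilde N(q+1)=\bar n(m-qc)$.

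The only real delicacy is bookkeeping the degenerate cases $\bar n=0$, $m=qc$, and $q=1$, and verifying that the uniform formulas in the statement continue to report the correct multiplicity in each (producing $0$ precisely when no line of that size actually exists). I expect the second-difference identity to be the only conceptual step; everything after it is a routine algebraic simplification that unifies all the boundary cases into the three formulas of the lemma.
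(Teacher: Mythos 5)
Your argument is correct, and it takes a genuinely different route from the paper's. The paper gives no self-contained proof of this lemma at all: it simply asserts that the proof of Lemma 3.1 in \cite{qQueensII} carries over with $m$ substituted for $n$ in the right places, and that proof is a direct geometric classification of the lines $l^{d/c}(b)$ by where they meet the board. You instead count length-$s$ segments, $\widetilde N(s)=\max(0,m-(s-1)c)\,\max(0,n-(s-1)d)$, and invert with the second difference $N(s)=\widetilde N(s)-2\widetilde N(s+1)+\widetilde N(s+2)$. The algebra you defer to does check out: with $q=\lfloor n/d\rfloor$ and $n=qd+\bar n$, the second difference of the quadratic $(m-(s-1)c)(n-(s-1)d)$ is $2cd$ for $1\le s<q$; for $s=q$ one gets $(m-qc+c)(\bar n+d)-2\bar n(m-qc)=(d-\bar n)(m-qc)+c(\bar n+d)$; and for $s=q+1$ only $\widetilde N(q+1)=\bar n(m-qc)$ survives. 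Two points deserve an explicit sentence in a final write-up: the step ``each maximal line of size $s'$ contains exactly $s'-s+1$ sub-tuples of length $s$'' uses that $l^{d/c}(b)\cap[m,n]$ is a \emph{contiguous} run of lattice points, which is exactly where convexity of the rectangle enters; and the hypothesis $\lfloor n/d\rfloor\le\lfloor m/c\rfloor$ is used precisely to guarantee $m-qc\ge 0$, so that the $\max$ in the first factor never activates for $s\le q+1$. As for what each approach buys: yours is self-contained, mechanical, and treats all the boundary cases ($\bar n=0$, $m=qc$, $q=1$) uniformly through a single identity, whereas the direct count in \cite{qQueensII} additionally tells you \emph{where} the lines of each size sit on the board (short lines near two of the corners), which is the geometric picture the surrounding discussion relies on.
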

\begin{proof}
The analogous proof found in \cite{qQueensII} holds, with $m$ substituted for $n$ in the appropriate locations. The hypotheses of the lemma are modified from that of \cite{qQueensII} to ensure that the analysis is correct.
\end{proof}
We note that by exchanging $m$ and $c$ with $n$ and $d$ if necessary, we can always satisfy the hypotheses for Lemma 2.1 regardless of the given board and slope $d/c$. We can therefore also calculate the number of ordered $p$-tuples of collinear attacking positions on rectangular boards (Proposition 3.1 of \cite{qQueensII}). Define:
\[\alpha^{d/c}(p;m,n)=\hspace{-.4cm}\sum_{l\in\mathbf{L}^{d/c}(m,n)}\hspace{-.4cm}l^p.\]
and notice that under the hypotheses of Lemma 2.1:
\begin{align*}
\alpha^{d/c}(p;m,n)&=2cd\sum_{l=1}^{\lfloor\tfrac{n}{d}\rfloor-1}l^p+\left((d-\bar{n})(m-c\lfloor\tfrac{n}{d}\rfloor)+c(\bar{n}+d)\right)\lfloor\tfrac{n}{d}\rfloor^p+(m-c\lfloor\tfrac{n}{d}\rfloor)(\lfloor\tfrac{n}{d}\rfloor+1)^p,
\end{align*}
which yields the rectangular lattice generalizations of $\alpha^{d/c}$ and $\beta^{d/c}$ from \cite{qQueensII} for $p=2,3$:
\begin{align*}
\alpha^{d/c}(2;m,n)&=\left(\frac{3dmn^2-cn^3}{d^2}+\frac{c}{3}n\right)+\frac{\bar{n}(d-\bar{n})}{d^2}\left(dm-cn-\frac{c(d-2\bar{n})}{3}\right)\\
\alpha^{d/c}(3;m,n)&=\left(\frac{2dmn^3-cn^4}{2d^3}+\frac{c}{2d}n^2\right)\\
&+\frac{\bar{n}(d-\bar{n})}{d^3}\left((3n+d-2\bar{n})dm-(3n+2d-4\bar{n})cn+\frac{3c\bar{n}(d-\bar{n})}{2}\right).
\end{align*}
We can therefore calculate the number of nonattacking configurations for two pieces of the same type $\mathbb{P}$ on a rectangular board.
\begin{theorem}
For $(c,d)\in\mathbf{M}$, let $\hat{c}=\min(c,d)$ and $\hat{d}=\max(c,d)$, and denote by $m_{\hat{c},\hat{d}}$ and $n_{\hat{c},\hat{d}}$ a choice of orientation such that the hypotheses of Lemma 2.1 are satisfied, i.e. $0<\lfloor \frac{n}{\hat{d}}\rfloor\leq \lfloor\frac{m}{c}\rfloor$. Then we have:
\begin{align*}
u_\mathbb{P}(2;m,n)&=\frac{1}{2}m^2n^2-\frac{|\mathbf{M}-1|}{2}mn\\
&-\sum_{(c,d)\in\mathbf{M}}\left[\left(\frac{3\hat{d}m_{\hat{c},\hat{d}}n_{\hat{c},\hat{d}}^2-\hat{c}n_{\hat{c},\hat{d}}^3}{\hat{d}^2}+\frac{\hat{c}}{3}n_{\hat{c},\hat{d}}\right)+\frac{\bar{n}_{\hat{c},\hat{d}}(\hat{d}-\bar{n}_{\hat{c},\hat{d}})}{\hat{d}^2}\left(dm_{\hat{c},\hat{d}}-\hat{c}n_{\hat{c},\hat{d}}-\frac{\hat{c}(\hat{d}-2\bar{n}_{\hat{c},\hat{d}})}{3}\right)\right].
\end{align*}
\end{theorem}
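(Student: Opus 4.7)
The plan is a standard inclusion-exclusion: from the total number of unordered pairs of distinct squares, $\binom{mn}{2} = \tfrac12 m^2n^2 - \tfrac12 mn$, I would subtract the attacking pairs contributed by each move direction in $\mathbf{M}$. Two pieces with move $(c,d)$ attack each other iff they lie on a common line of slope $d/c$, and a line of size $l$ contributes $\binom{l}{2}$ unordered attacking pairs, so the total attacking count in direction $(c,d)$ is
\[\sum_{l\in\mathbf{L}^{d/c}(m,n)}\binom{l}{2}=\tfrac12\!\left(\alpha^{d/c}(2;m,n)-mn\right),\]
where the identity $\sum_{l} l = mn$ holds because every square lies on a unique nonempty line in direction $(c,d)$. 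Summing this over $(c,d)\in\mathbf{M}$ and subtracting from $\binom{mn}{2}$ collapses the $|\mathbf{M}|$ copies of $+\tfrac12 mn$ against the $-\tfrac12 mn$ from the binomial, leaving the stated two-term prefactor plus the negative sum of $\alpha^{d/c}(2;m,n)$ values. The only nontrivial input at this stage is the explicit closed form for $\alpha^{d/c}(2;m,n)$ computed just before the theorem statement.

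The subtle point, and the reason the statement introduces the ornate notation $\hat{c},\hat{d},m_{\hat{c},\hat{d}},n_{\hat{c},\hat{d}}$, is that the closed form for $\alpha^{d/c}(2;m,n)$ is only valid under the hypothesis $0<\lfloor n/d\rfloor\leq\lfloor m/c\rfloor$ of Lemma 2.1, which may fail for an arbitrary move and board. The fix, foreshadowed in the remark following Lemma 2.1, is the reflection symmetry across the diagonal $y=x$: this sends lines of slope $d/c$ on the $m\times n$ board bijectively to lines of slope $c/d$ on the $n\times m$ board and preserves line sizes, so $\mathbf{L}^{d/c}(m,n)=\mathbf{L}^{c/d}(n,m)$ as multisets, and in particular $\alpha^{d/c}(2;m,n)=\alpha^{c/d}(2;n,m)$. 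Setting $\hat{c}=\min(c,d)$, $\hat{d}=\max(c,d)$ and choosing $(m_{\hat{c},\hat{d}},n_{\hat{c},\hat{d}})\in\{(m,n),(n,m)\}$ to satisfy Lemma 2.1's hypothesis lets us rewrite $\alpha^{d/c}(2;m,n)$ as $\alpha^{\hat{d}/\hat{c}}(2;m_{\hat{c},\hat{d}},n_{\hat{c},\hat{d}})$ and then apply the closed form directly.

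The main obstacle to verify is that such an orientation always exists. This is the one nontrivial check in the proof: assuming the hypothesis fails in orientation $(m,n)$, i.e. $\lfloor n/\hat{d}\rfloor > \lfloor m/\hat{c}\rfloor$, the inequality $\hat{c}\leq\hat{d}$ forces $\lfloor m/\hat{d}\rfloor\leq\lfloor m/\hat{c}\rfloor$ and $\lfloor n/\hat{d}\rfloor\leq\lfloor n/\hat{c}\rfloor$, which chain together to give $\lfloor m/\hat{d}\rfloor\leq\lfloor m/\hat{c}\rfloor<\lfloor n/\hat{d}\rfloor\leq\lfloor n/\hat{c}\rfloor$, showing that the reflected orientation $(n,m)$ does satisfy it. Once this orientation is fixed for each move, substituting the formula for $\alpha^{\hat{d}/\hat{c}}(2;m_{\hat{c},\hat{d}},n_{\hat{c},\hat{d}})$ into the inclusion-exclusion expression produces the claimed closed form, and the proof reduces to algebraic bookkeeping.
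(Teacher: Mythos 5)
Your proposal matches the paper's proof in essence: both reduce the count to the quantities $\alpha^{\hat{d}/\hat{c}}(2;m_{\hat{c},\hat{d}},n_{\hat{c},\hat{d}})$ summed over the moveset after reorienting the board to meet Lemma 2.1's hypothesis, differing only in that you count unordered pairs directly while the paper counts ordered pairs and divides by $2!$ (and your explicit check that a valid orientation exists is a welcome detail the paper omits). Note that your (correct) bookkeeping yields $+\frac{|\mathbf{M}|-1}{2}mn-\frac{1}{2}\sum_{(c,d)\in\mathbf{M}}\alpha^{\hat{d}/\hat{c}}(2;\cdot)$, which agrees with the penultimate line of the paper's own proof but not with the theorem as displayed, whose sign on the $mn$ term and missing factor of $\frac{1}{2}$ on the sum appear to be typographical errors in the statement rather than a gap in your argument.
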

\begin{proof}
We first calculate the number of attacking configurations, noting that this can only occur if the two pieces are along the same line of slope $(c,d)$ for some move in $\mathbf{M}$:
\[a_\mathbb{P}(2;m,n)=\hspace{-.2cm}\sum_{(c,d)\in\mathbf{M}}\hspace{-.2cm}\alpha^{d/c}(2;m_{\hat{c},\hat{d}},n_{\hat{c},\hat{d}})-(|\mathbf{M}|-1)mn.\]
Then we can calculate the number of attacking configurations:
\begin{align*}
u_\mathbb{P}(2;m,n)&=\frac{1}{2!}o_{\mathbb{P}}(2;m,n)=\frac{1}{2}\left[m^2n^2-\hspace{-.2cm}\sum_{(c,d)\in\mathbf{M}}\hspace{-.2cm}(2;m_{c,d},n_{c,d})+(|\mathbf{M}|-1)mn\right]\\
&=\frac{1}{2}m^2n^2-\frac{|\mathbf{M}-1|}{2}mn\\
&-\sum_{(c,d)\in\mathbf{M}}\left[\left(\frac{3\hat{d}m_{\hat{c},\hat{d}}n_{\hat{c},\hat{d}}^2-\hat{c}n_{\hat{c},\hat{d}}^3}{\hat{d}^2}+\frac{\hat{c}}{3}n_{\hat{c},\hat{d}}\right)+\frac{\bar{n}_{\hat{c},\hat{d}}(\hat{d}-\bar{n}_{\hat{c},\hat{d}})}{\hat{d}^2}\left(dm_{\hat{c},\hat{d}}-\hat{c}n_{\hat{c},\hat{d}}-\frac{\hat{c}(\hat{d}-2\bar{n}_{\hat{c},\hat{d}})}{3}\right)\right].
\end{align*}
\end{proof}
\section{One-Move Riders}
We proceed to generalize Proposition 6.1 from \cite{qQueensII}, calculating values of $u_\mathbb{P}(p;m,n)$ for $\mathbb{P}$ a one-move rider ($\mathbf{M}=\{(c,d)\}$ with $c,d$ relatively prime) on any convex board with line set $\mathbf{L}^{d/c}(\mathcal{B})$.

\begin{proposition}
For a piece $\mathbb{P}$ with moveset $\mathbf{M}=\{(c,d)\}$ with $0\leq c\leq d$ on a convex board $\mathcal{B}$,
\[u_\mathbb{P}(q;\mathcal{B})=\hspace{-.8cm}\sum_{n_1\lambda_1+\dots+n_k\lambda_k=q}\hspace{-.8cm}(-1)^{q-\sum_{i=1}^kn_i}\prod_{i=1}^k\frac{1}{\lambda_i^{n_i}n_i!}\left(\sum_{l\in\mathbf{L}^{d/c}}l^{\lambda_i}\right)^{n_i},\]
where the outermost sum represents a sum over all integer partitions of $q$, the $\lambda_i$ denote integers appearing in a particular partition and the $n_i$ their multiplicity.
\end{proposition}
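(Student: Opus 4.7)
The plan is to reduce the count $u_\mathbb{P}(q;\mathcal{B})$ to the $q$-th elementary symmetric polynomial in the line sizes, and then apply the classical Newton-Girard identities to convert to power sums. First I would observe that because the moveset is the singleton $\mathbf{M}=\{(c,d)\}$, two pieces attack one another if and only if they lie on the same line of slope $d/c$ meeting the board. Convexity of $\mathcal{B}$ ensures that every such line intersects $\mathcal{B}$ in a single connected segment, so these segments partition the cells of $\mathcal{B}$, and their sizes are precisely the multiset $\mathbf{L}^{d/c}$.

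Next I would argue that a nonattacking configuration of $q$ pieces is exactly the data of a $q$-element subset of lines together with a choice of one cell on each selected line. Summing the product of line sizes over such subsets yields
\[u_\mathbb{P}(q;\mathcal{B})=e_q\bigl(\mathbf{L}^{d/c}\bigr),\]
the $q$-th elementary symmetric polynomial evaluated on the multiset of line sizes. As a sanity check, for $q=2$ this specializes to $\tfrac{1}{2}(p_1^2-p_2)$, matching the $|\mathbf{M}|=1$ case of Theorem 2.2 with $p_1=mn$ and $p_2=\alpha^{d/c}(2;m,n)$.

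Finally I would invoke the Newton-Girard identity to express $e_q$ in terms of the power sums $p_k:=\sum_{l\in\mathbf{L}^{d/c}}l^k$. The cleanest route is the generating-function identity
\[\sum_{q\geq 0}e_q\,t^q=\prod_{l\in\mathbf{L}^{d/c}}(1+lt)=\exp\!\left(\sum_{k\geq 1}\frac{(-1)^{k-1}}{k}p_k t^k\right);\]
expanding the exponential as a multinomial and grouping by partition type $n_1\lambda_1+\cdots+n_k\lambda_k=q$ yields exactly the stated sum, with the overall sign $(-1)^{q-\sum_i n_i}$ coming from the parity identity $\sum_i(\lambda_i-1)n_i=q-\sum_i n_i$. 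There is no genuine obstacle here: the reduction to $e_q$ is an immediate consequence of the one-move-rider hypothesis and convexity of $\mathcal{B}$, and the remainder is routine bookkeeping inside a standard symmetric-function identity. The only subtle point worth flagging is that convexity is really being used, since on a nonconvex board a line of slope $d/c$ can meet $\mathcal{B}$ in multiple disjoint segments, and the identification with $e_q$ only survives if $\mathbf{L}^{d/c}$ is interpreted as the multiset of those segment sizes.
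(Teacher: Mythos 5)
Your proposal is correct and follows essentially the same route as the paper: both reduce $u_\mathbb{P}(q;\mathcal{B})$ to the $q$-th elementary symmetric function of the line-size multiset $\mathbf{L}^{d/c}$ and then convert to power sums with coefficient $(-1)^{q-\sum_i n_i}/\prod_i \lambda_i^{n_i}n_i!$, the paper citing the centralizer-order formula in MacDonald where you derive the same Newton--Girard identity from the $\exp$--$\log$ generating-function expansion. Your explicit remark that convexity is what guarantees each line meets the board in a single segment is a worthwhile clarification that the paper leaves implicit.
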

\begin{proof}
We notice that in order for $q$ pieces on board $\mathcal{B}$ to be in a nonattacking configuration, it must be the case that the $q$ pieces are on $q$ distinct lines $l_1,\dots,l_q\in\mathbf{L}^{d/c}$. Therefore the number of nonattacking configurations is given by a sum over all possible combinations of $q$ lines:
\[u_\mathbb{P}(q;\mathcal{B})=\hspace{-.6cm}\sum_{\{l_1,\dots,l_q\}\subset\mathbf{L}^{d/c}}\hspace{-.4cm}l_1l_2\cdots l_q.\]
While the above equation does indeed yield a closed-form expression for the number of nonattacking configurations, enumerating subsets of a multiset is a difficult task. Instead, using the principle of inclusion-exclusion, we obtain a weighted alternating sum over partitions of $q$ consisting of products of sums of powers indexed by the multiset $\mathbf{L}$. An elementary combinatorial argument relates the coefficient of each summand to the order of the centralizer of the permutation group with $\sum_{i=1}^kn_i$ elements and cycle type $\lambda_1,\dots,\lambda_k$, which is calculated in \cite{SymmFunc}. The above expression then reads
\[u_\mathbb{P}(q;\mathcal{B})=\hspace{-.8cm}\sum_{n_1\lambda_1+\dots+n_k\lambda_k=q}\hspace{-.8cm}(-1)^{q-\sum_{i=1}^kn_i}\prod_{i=1}^k\frac{1}{n_i!}\left(\frac{1}{\lambda_i}\sum_{l\in\mathbf{L}^{d/c}}l^{\lambda_i}\right)^{n_i}.\]
\end{proof}
Restricting to the case of a $m\times n$ rectangular board, and orienting such that the hypotheses of Lemma 2.1 are satisfied, the sum inside the parentheses can be written:
\[\sum_{l\in\mathbf{L}^{d/c}(m,n)}\hspace{-.5cm}l^{\lambda_i}=2cd\sum_{l=1}^{\lfloor\frac{n}{d}\rfloor-1}l^{\lambda_i}+\left(d(c+m)-cn+(2c-m+c\lfloor\tfrac{n}{d}\rfloor)\bar{n}\right)\lfloor\tfrac{n}{d}\rfloor^{\lambda_i}+\bar{n}(m-c\lfloor\tfrac{n}{d}\rfloor)\left(\lfloor\tfrac{n}{d}\rfloor+1\right)^{\lambda_i},\]
by Lemma 2.1. Conjecture 6.1 from \cite{qQueensII}, which establishes the period of the coefficients of the quasipolynomial $u_\mathbb{P}(q;n,n)$ for a one-move rider on a square board, easily follows from the above:
\begin{theorem}
For a one-move rider with basic move $(c,d)$, the period of $u_\mathbb{P}(q;n,n)$ is exactly $\max(|c|,|d|)$.
\end{theorem}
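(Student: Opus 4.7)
The plan is to prove matching upper and lower bounds on the quasipolynomial period, after reordering so that $c \le d$ and setting $d = \max(|c|,|d|)$. For the upper bound, I would substitute the displayed formula for $\sum_{l\in\mathbf{L}^{d/c}(n,n)} l^{\lambda_i}$ (derived from Lemma 2.1) into the expression of Proposition 3.1. Each such sum is polynomial in $\lfloor n/d\rfloor$ with coefficients depending only on $n$ and $\bar{n} = n\bmod d$, and $\lfloor n/d\rfloor = (n-\bar{n})/d$ is linear in $n$ once a residue class modulo $d$ is fixed. Substituting produces an expression for $u_\mathbb{P}(q;n,n)$ that is polynomial in $n$ on each such class, so the quasipolynomial period divides $d$.

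For the lower bound in the base case $q = 2$, I would specialize Theorem 2.2 to $m = n$. After simplification the coefficient of $n^1$ in $u_\mathbb{P}(2;n,n)$ equals $-c/3 - (d-c)\bar{n}(d-\bar{n})/d^2$. Assuming $c < d$ (the degenerate case $c = d = 1$ trivially gives period $1 = d$), the $\bar{n}$-dependent contribution is a nonzero negative multiple of $\bar{n}(d-\bar{n})$, which on $\{0,1,\dots,d-1\}$ vanishes only at $\bar{n} = 0$ and is strictly positive everywhere else. For any proper divisor $d' < d$ of $d$, shifting $\bar{n} = 0$ to $\bar{n} = d'$ produces the positive value $d'(d - d')$, contradicting period $d'$. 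This pins the minimum period of $u_\mathbb{P}(2;n,n)$ to exactly $d$.

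For $q \ge 3$, I would propagate the conclusion using Newton's recurrence $q\, e_q = \sum_{i=1}^q (-1)^{i-1} e_{q-i}\, p_i$ with $p_i = \alpha^{d/c}(i;n,n)$: each $p_i$ for $i \ge 2$ inherits a $\bar{n}(d-\bar{n})$ factor in its $\bar{n}$-dependent part, and careful bookkeeping of which powers of $n$ these factors reach should exhibit at least one coefficient of $u_\mathbb{P}(q;n,n)$ that retains a nonzero $\bar{n}(d-\bar{n})$-type dependence. The main obstacle will be controlling the alternating signs so as to rule out accidental cancellation; a safer fallback is to note that the multiset $\mathbf{L}^{d/c}(n,n)$ itself genuinely changes with $\bar{n}$ (the line size $\lfloor n/d\rfloor + 1$ appears iff $\bar{n} \ne 0$), so the generating polynomial $\prod_{l}(1 + lt)$ differs between residue classes as a polynomial in $t$, which combined with degree considerations in $n$ forces each relevant $u_\mathbb{P}(q;n,n)$ to have period exactly $d$.
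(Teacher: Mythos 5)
Your upper-bound argument (the period divides $d$) is precisely the paper's proof: substitute the $d$-periodic power sums $\sum_{l\in\mathbf{L}^{d/c}}l^{\lambda_i}$ coming from Lemma 2.1 into the formula of Proposition 3.1 and observe that a polynomial in $d$-periodic quasipolynomials is $d$-periodic. In fact that is \emph{all} the paper proves; its proof never addresses why no proper divisor of $d$ is a period, so the word ``exactly'' in the statement is not actually justified there. You are right to see that a lower bound is required, and your $q=2$ argument is correct and complete: the coefficient of $n$ in $u_\mathbb{P}(2;n,n)$ carries the term $-(d-c)\bar{n}(d-\bar{n})/d^2$ (up to the overall factor of $\tfrac12$ from Theorem 2.2), which vanishes at $\bar{n}=0$ but equals a strictly negative quantity at $\bar{n}=d'$ for any proper divisor $d'<d$ when $c<d$; since the minimum period must divide $d$, this rules out every candidate. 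That is a genuine improvement over the paper.

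The gap is the case $q\ge 3$, which you leave as a sketch. Your fallback does not close it: the fact that the multiset $\mathbf{L}^{d/c}(n,n)$ varies with $\bar{n}$ does not by itself show that the single number $e_q(\mathbf{L})$, restricted to two residue classes, is given by different polynomials in $n$ --- the leading coefficient is $n^{2q}/q!$ on every class, so ``degree considerations'' yield nothing, and one must exhibit a specific subleading coefficient that differs. The cancellation worry in your Newton's-identity route can, however, be dispatched concretely: since $p_1=\sum_l l=n^2$ is $\bar{n}$-independent and the $\bar{n}$-dependent part of $p_i=\alpha^{d/c}(i;n,n)$ has degree $i-1$ in $n$, the only partition of $q$ contributing $\bar{n}$-dependence to the coefficient of $n^{2q-3}$ in $u_\mathbb{P}(q;n,n)$ is $(2,1^{q-2})$, whose summand is $-\tfrac{1}{2(q-2)!}p_1^{q-2}p_2$; hence that coefficient equals a $\bar{n}$-independent quantity minus $\tfrac{1}{2(q-2)!}(d-c)\bar{n}(d-\bar{n})/d^2$, and the $q=2$ argument applies verbatim. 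Until some such bookkeeping is written down, the exactness claim for $q\ge3$ remains unproved --- in your proposal and in the paper alike.
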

\begin{proof}
It suffices to fix $d\geq c$. By the previous discussion, we have an explicit formula for the quasipolynomial:
\[u_\mathbb{P}(q;n,n)=\hspace{-.8cm}\sum_{n_1\lambda_1+\dots+n_k\lambda_k=q}\hspace{-.8cm}(-1)^{q-\sum_{i=1}^kn_i}\prod_{i=1}^k\frac{1}{\lambda_i^{n_i}n_i!}\left(\sum_{l\in\mathbf{L}^{d/c}}l^{\lambda_i}\right)^{n_i},\]
Fixing $q$ in the above expression, we obtain a polynomial expression for $u_\mathbb{P}(q;n,n)$ in terms of quasipolynomials $\sum_{l\in\mathbf{L}^{d/c}}l^{\lambda_i}$, each of which are $d$-periodic. It then immediately follows that $u_\mathbb{P}(q;n,n)$, as a polynomial of $d$-periodic quasipolynomials, is also $d$-periodic. 
\end{proof}
While the above expression is well-suited to the proof of the periodicity of the coefficients, we can obtain an alternate, more computationally efficient formula for $u_\mathbb{P}(q;m,n)$ by noting that for a board $\mathcal{B}$ with multiset of lines $\mathbf{L}^{d/c}$, the coefficient of $x^{|\mathbf{L}|-q}$ in the following polynomial
\[\prod_{l\in\mathbf{L}^{d/c}}(x+l),\]
yields the value of $u_\mathbb{P}(q;\mathcal{B})$. In the case of the rectangular board, the above polynomial takes a particularly simple form:
\[(x+\lfloor\tfrac{n}{d}\rfloor)^{(d-\bar{n})(m-c\lfloor\tfrac{n}{d}\rfloor)+c(\bar{n}+d)}(x+\lfloor\tfrac{n}{d}\rfloor+1)^{\bar{n}(m-c\lfloor\tfrac{n}{d}\rfloor)}\prod_{i=1}^{\lfloor\tfrac{n}{d}\rfloor-1}(x+i)^{2cd}.\]
Calculating the generating function for the coefficients of the above polynomial therefore yields an expression for $u_\mathbb{P}(q;m,n)$:
\begin{theorem}
Let $0<\lfloor \frac{n}{d}\rfloor\leq \lfloor\frac{m}{c}\rfloor$ with $\gcd(c,d)=1$. For a one move rider with moveset $\mathbf{M}=\{(c,d)\}$ on an $m\times n$ rectangular board, the value of $u_\mathbb{P}(q;m,n)$ is given by
\[\sum_{k+j+x_1+\dots+x_{2cd}=q}{(d-\bar{n})(m-c\lfloor\tfrac{n}{d}\rfloor)+c(\bar{n}+d)\choose j}\lfloor\tfrac{n}{d}\rfloor^j{\bar{n}(m-c\lfloor\tfrac{n}{d}\rfloor)\choose k}(\lfloor\tfrac{n}{d}\rfloor+1)^k\prod_{i=1}^{2cd}\left[{\lfloor\tfrac{n}{d}\rfloor\atop \lfloor\tfrac{n}{d}\rfloor-x_i}\right],\]
where the square brackets denote unsigned Stirling numbers of the first kind and $k,j,x_1,\dots x_{2cd}$ are nonnegative integers.
\end{theorem}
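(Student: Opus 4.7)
The starting point is the observation, already made in the paragraph immediately before the statement, that $u_\mathbb{P}(q;\mathcal{B})$ equals the coefficient of $x^{|\mathbf{L}|-q}$ in $\prod_{l\in\mathbf{L}^{d/c}}(x+l)$, together with the explicit factored form of this product on the rectangular board. Writing $N:=\lfloor n/d\rfloor$, $A:=(d-\bar{n})(m-cN)+c(\bar{n}+d)$, and $B:=\bar{n}(m-cN)$, the polynomial in question is $(x+N)^A(x+N+1)^B\prod_{i=1}^{N-1}(x+i)^{2cd}$, of total degree $|\mathbf{L}|=A+B+2cd(N-1)$. The plan is to expand each of the three types of factors separately and then collect the coefficient of $x^{|\mathbf{L}|-q}$.

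\textbf{Expansion step.} The binomial theorem immediately handles the first two factors, yielding the terms $\binom{A}{j}N^j x^{A-j}$ and $\binom{B}{k}(N+1)^k x^{B-k}$. For the third factor, the essential identity is the generating function for the unsigned Stirling numbers of the first kind, $x(x+1)\cdots(x+N-1)=\sum_{r=0}^{N}\left[{N\atop r}\right]x^r$. Dividing by $x$ and using $\left[{N\atop 0}\right]=0$ for $N\geq 1$, one obtains $\prod_{i=1}^{N-1}(x+i)=\sum_{s=0}^{N-1}\left[{N\atop s+1}\right]x^s$. Treating $\prod_{i=1}^{N-1}(x+i)^{2cd}$ as a product of $2cd$ independent copies of this polynomial, a monomial of $x$-degree $s_1+\cdots+s_{2cd}$ appears with coefficient $\prod_{i=1}^{2cd}\left[{N\atop s_i+1}\right]$.

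\textbf{Degree matching.} To match the form displayed in the statement, introduce the deficit variables $x_i:=(N-1)-s_i$, so that $\left[{N\atop s_i+1}\right]=\left[{N\atop N-x_i}\right]$ and $\sum s_i = 2cd(N-1)-\sum x_i$. Multiplying the three expansions and reading off the $x$-degree of a generic monomial gives $(A-j)+(B-k)+\bigl(2cd(N-1)-\sum x_i\bigr)=|\mathbf{L}|-\bigl(j+k+\sum x_i\bigr)$, so the condition of extracting $[x^{|\mathbf{L}|-q}]$ becomes precisely the constraint $j+k+x_1+\cdots+x_{2cd}=q$. Assembling the coefficients then produces the claimed formula.

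\textbf{Expected difficulty.} The computation is largely mechanical once the coefficient-extraction framework is in place. The only subtle point is the index bookkeeping in the Stirling step: handling the off-by-one shift caused by the product beginning at $i=1$ rather than $i=0$, and translating between the ``ascending-power'' Stirling expansion and the ``deficit'' form $\left[{N\atop N-x_i}\right]$ that appears on the right-hand side.
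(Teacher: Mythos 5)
Your proposal is correct and follows essentially the same route as the paper: both extract the coefficient of $x^{|\mathbf{L}|-q}$ from the factored polynomial $(x+\lfloor\tfrac{n}{d}\rfloor)^{A}(x+\lfloor\tfrac{n}{d}\rfloor+1)^{B}\prod_{i=1}^{\lfloor n/d\rfloor-1}(x+i)^{2cd}$ by expanding the power factors via the binomial theorem and the remaining product via the Stirling-number generating function, then sum over the ways of distributing $q$ among the factors. Your version is in fact more careful than the paper's, which glosses over the off-by-one index shift in the Stirling expansion that you handle explicitly with the substitution $x_i=(\lfloor n/d\rfloor-1)-s_i$.
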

\begin{proof}
We recall the well-known result (\cite{EnumComb}, Corollary 2.4.2) that the $q$th coefficient of the polynomial $\prod_{i=1}^{n}(x+n)=S(n,q)$. Similarly, the $p$th coefficient of $(x+m)^s$ is given by ${s\choose p}m^p$. Then the above formula follows by summing over all ordered partitions of $q$ into the $2(cd\lfloor\tfrac{n}{d}\rfloor+1)$ of the above components that form the given polynomial and multiplying the corresponding binomial coefficients and Stirling numbers within each summand.
\end{proof}
This expression can be seen as an improvement over that in Proposition 3.1, as when the one-move rider is fixed (i.e. $c$ and $d$ are fixed), then the outermost sum over fixed-length partitions simplifies into a finite number of sums which is easier to evaluate. In particular, the above formula can be used to rederive certain classical expressions for particular one-move riders on a rectangular board. Consider, for example, the semirook (denoted $\mathbb{Q}^{10}$ as per the convention in \cite{qQueensIII}), a piece which moves only vertically. A simple combinatorial argument gives:
\[u_{\mathbb{Q}^{10}}(q,m,n)={m\choose q}n^q\]
To obtain this expression from the formula given in Theorem 3.3,  simply consider that $\bar{n}=2cd=0$, and therefore the only term that survives is:
\[u_{\mathbb{Q}^{10}}(q,m,n)={(d-\bar{n})(m-c\lfloor\tfrac{n}{d}\rfloor)+c(\bar{n}+d)\choose q}\lfloor\tfrac{n}{d}\rfloor^q={m\choose q}n^q\]
In a similar fashion, we can also derive the number of nonattacking configurations for a semibishop, denoted $\mathbb{Q}^{01}$.
In this case, one of the binomial coeffients vanishes, and the expression simplifies to
\[u_{\mathbb{Q}^{01}}(q;m,n)=\sum_{l=0}^{m-n+1}n^{m-n-l+1}{m-n+1\choose l}\sum_{j=1}^{m+n-q-l}\left[{n\atop j}\right]\left[n\atop m+n-q-j-l+1\right],\]
where we note that $m\geq n$. Setting $m=n$, the above formula simplifies to the expression provided in \cite{MathChess} for semibishops on the square $n\times n$ board.\\

\section{Conclusion and Future Work}
In Proposition 3.1 and Theorem 3.3, we have found the first closed-form formulas that entirely solve the $q$-Queens problem on a rectangular board for a specific class of chess pieces, namely, one-move riders. This is a significant breakthrough in the course of $q$-Queens history, and we hope that some of the techniques utilized will be extended to make further progress on this problem. In particular, we hope to extend our results to two-move riders using the theory of rook polynomials and generating functions, although this task appears to be much more difficult due to our inability to obtain closed expressions for rook polynomials for the necessary classes of skew Ferrers boards.

\end{onehalfspace}
\end{document}